\theoremstyle{plain}
\newtheorem{theorem}{Theorem}
\newtheorem{lemma}{Lemma}
\theoremstyle{definition}
\theoremstyle{remark}
\numberwithin{equation}{section}
\begin{document}
 %\linenumbers
\nocite{*}
\title[Stability in the class of ...]{Stability in the class of first order delay differential equations}

\author{Eszter Gselmann }
\address{Department on Analysis\\Institute of Mathematics\\University of Debrecen\\ H--4010 Debrecen P.O.Box:12}
\email{gselmann@science.unideb.hu}
\thanks{This research has been supported by the Hungarian Scientific Research Fund
(OTKA) Grant NK 111651 
and by the T\'{A}MOP 4.2.4.A/2-11-1-2012-0001 (Nemzeti Kiv\'{a}l\'{o}s\'{a}g Program --
Hazai hallgat\'{o}i, illetve kutat\'{o}i szem\'{e}lyi t\'{a}mogat\'{a}st biztos\'{i}t\'{o} rendszer kidolgoz\'{a}sa \'{e}s m\H{u}k\"{o}dtet\'{e}se
konvergencia program) project implemented
through the EU and Hungary co-financed by the European Social Fund.}

\author{Anna Kelemen}
\address{Faculty of Informatics\\University of Debrecen\\ H--4010 Debrecen P.O.Box:12}
\email{pandora0303@hotmail.com}

\begin{abstract}
The main aim of this paper is the investigation of the stability problem for 
ordinary delay differential equations. 
More precisely, we would like to study the following problem. 
Assume that for a continuous function a given delay differential equation is 
fulfilled \emph{only approximately}. Is it true that in this case this function 
is \emph{close} to an exact solution of this delay differential equation?
\end{abstract}

\subjclass[2010]{Primary 39B82 \and Secondary 34K20.}

\keywords{delay differential equation, functional-differential equation, 
stability, continuous dependence}

\maketitle

\section{Introduction and preliminaries}

Whenever we would like to analyze mathematically a problem, arising in the 'real' world, first we have to 
select a mathematical model for formulation of our problem. 
This means that we have to choose or invent some mathematical problem to present our 'real world problem'. 
The mathematical models chosen are often differential equations. 
Mostly this differential equations are rather special, more precisely, 
in them the unknown functions and its derivatives are all evaluated at \emph{the same 
instant} $t$. 

A more general type of differential equations, called \emph{functional differential equations}, are those in which 
the unknown functions occur with different arguments. Such type of equations are rather natural for example in 
control systems, since any system involving feedback control will certainly involve \emph{time delays}. 
These arise because a finite time is required to sense information and then react to it. 

We remark an other problem which comes from electrodynamics, the \emph{two-body problem}. Let us consider two electrons and try to 
describe the interaction between them. We have to take it into account that the interactions between the two particles travel not instantaneously, 
but with finite speed $c$ (the speed of light). 
Let $x_{1}(t)$ and $x_{2}(t)$ represent the positions of the two particles at time $t$. The the fields reaching $x_{1}(t)$ at time $t$ from the second particle 
were generated at $t-r$, where the delay must satisfy 
\[
 cr=\left| x_{1}(t)-x_{2}(t-r)\right|. 
\]
Clearly, the (unknown) delay $r$ is not a constant but it depends on $t$. 
In terms of these delays one can obtain the following system for the motion 
\[
 \begin{array}{rcl}
  x_{1}''(t)&=&f_{1}\left(x_{1}(t)-x_{2}(t-r_{21}(t)), x_{1}'(t), x_{2}'(t-r_{21}(t))\right)\\
   x_{2}''(t)&=&f_{2}\left(x_{2}(t)-x_{1}(t-r_{12}(t)), x_{2}'(t), x_{1}'(t-r_{12}(t))\right),  
 \end{array}\]
with certain maps $f_{1}, f_{2}\colon \mathbb{R}^{3}\to \mathbb{R}$.

In a real world applications, initial conditions are almost never known exactly. Rather,
experimental and physical errors will only allow us to say that their values are 
approximately equal to those in our mathematical model. Thus, to retain physical relevance, we
need to be sure that small errors in our initial measurements
do not induce a large change
in the solution. A similar argument can be made for any physical parameters, e.g., masses,
charges, spring stiffnesses, frictional coefficients, etc.,
that appear in the differential equation itself. 
A slight change in the parameters should not have a dramatic effect on the solution. 
Such type of results are referred to \emph{continuous dependence on the initial values}. 

It can also however happen that not only the initial value but also 
the equation is satisfied 'approximately'. These type of investigations began in the 1990s. 
For example in \cite{AlsinaGer} C.~Alsina and R.~Ger studied the following problem. 
Let $\varepsilon>0$ be given, $I\subset \mathbb{R}$ be a nonvoid open interval and 
assume that for the differentiable function $x\colon I\to \mathbb{R}$ inequality 
\[
 \left|x'(t)-x(t)\right| <\varepsilon 
\qquad 
\left(t\in I\right). 
\]
Is it true that there exist  constants $C, \kappa$ such that 
\[
 \left|x(t)-C\exp(t)\right|<\kappa\varepsilon
\]
is fulfilled for any $x\in I$. In the above mentioned paper (among others) 
this problem is answered in the affirmative. 
Roughly speaking this result expresses the following. Assume that for the differentiable function 
$x\colon I\to \mathbb{R}$ differential equation 
\[
 x'(t)=x(t) 
\qquad 
\left(t\in I\right)
\]
is fulfilled \emph{only approximately}. Is it true that in such a situation there exists a solution to this 
differential equation $\widetilde{x}\colon I\to \mathbb{R}$ such that the functions 
$x$ and $\widetilde{x}$ are \emph{close to each other}?

Later, in \cite{TakMiuMiy02} the first order equation 
\[
 x'(t)=\lambda x(t)
\]
was considered for Banach space valued functions, and the same question was answered affirmatively. 

Recently the study of such problems became an intensively developing  area, see for instance 
Jung \cite{Jung10}, \cite{Jung12} and Andr\'{a}s--Kolumb\'{a}n \cite{AndrasKolumban}. 
For example, in the latter work using the so-called Perov fixed point theorem, the authors 
studied the following system 
\[
 \begin{array}{rcl}
  x'(t)&=&f_{1}(t, x(t), y(t))\\
y'(t)&=&f_{2}(t, x(t), y(t)) \qquad \text{almost everywhere}\\
x(0)&=&\alpha[x]\\
y(0)&=&\beta[y]
 \end{array}
\]
on bounded as well as on unbounded subset of $]0, +\infty[$. Nevertheless, the use of Perov fixed point theorem, allowed only that 
the functions $f_{1}, f_{2}$ are Lipschitz functions with \emph{small} Lipschitz constant. 

Furthermore, in \cite{JungBrzdek} the stability
problem for a rather special delay differential equations was investigated, namely
\[x (t) = \lambda x(t -\tau )\]
As we will see later, the stability problem of this equation will be a particular case
of ours with the choice $m = 1, g1 (t) = t -\tau$ and
\[f \left(t, x (g_{1}(t))\right) = \lambda x (g_{1} (t)) = \lambda x(t -\tau ).\]
For further results concerning stability of functional as well as ordinary differ-
ential equations we refer also to \cite{Jung04, FechGer}.

Therefore the main aim of this paper is to investigate and to extend the above presented problem for a rather large class 
of first order ordinary delay differential equations. 

First of all, we will list the notation and the terminology that will be used subsequently. 
While doing so we will relay on the two basic monographs Driver \cite{Dri77} (concerning delay differential equations) and 
Walter \cite{Wal64} (concerning integral inequalities).

Let $J=[t_{0}, \beta[\subset \mathbb{R}$ be an interval and 
$D\subset \mathbb{R}^{n}$ be an open set. 
Let further $f\colon J\times D^{m}\to \mathbb{R}^{n}$ be a mapping and 
$g_{1}, \ldots, g_{m}\colon J\to \mathbb{R}$ be functions so that 
\[
 \gamma \leq g_{j}(t)\leq t 
\qquad 
\left(t\in [t_{0}, \beta[\right)
\]
for all $j=1, \ldots, m$ with a certain real number $\gamma$. Finally, let 
$\theta\colon [\gamma, t_{0}]\to D$ be a given initial function. 
Consider the system of delay differential equations 
\[
 x'(t)= f\left(t, x(g_{1}(t)), \ldots, x(g_{m}(t))\right). 
\]
Given $t\in [t_{0}, \beta[$ and any function 
$\chi\colon [\gamma, t]\to D$ define 
\[
 F(t, \chi_{t})= 
f\left(t, x(g_{1}(t)), \ldots, x(g_{m}(t))\right). 
\]
With this notation, our main aim is to investigate the 
Hyers--Ulam stability of the following problem. 
\[
 \tag{$\mathscr{P}$}\label{prob} 
\begin{array}{rcl}
 x'(t)&=&F(t, x_{t})\qquad t\geq t_{0}\\
x(t)&=&\theta(t) \qquad t\in [\gamma, t_{0}]. 
\end{array}
\]
A solution of this problem \eqref{prob} is a continuous function 
$x\colon [\gamma, \beta_{1}[\to D$ for some $\beta_{1}\in [\gamma, t_{0}]$ such that 
\begin{enumerate}[(i)]
 \item for all $t\in [\gamma, \beta_{1}]$ we have $(t, x_{t})\in [t_{0}, \beta_{1}[\times D^{m}$
\item for any $t\in [t_{0}, \beta_{1}[$ 
\[
 x'(t)=F(t, x_{t})
\]
is fulfilled. 
\item $x(t)=\theta(t)$ for all $t\in [\gamma, t_{0}]$. 
\end{enumerate}

Henceforth we will assume that $f$ and $g_{1}, \ldots, g_{m}$ are continuous. Thus if 
$x$ is continuous, then the mapping 
\[
[\gamma, \beta_{1}[ \ni t \longmapsto F(t, x_{t})
\]
is continuous, as well. In such a situation however 
\[
 \tag{$\mathscr{I}$}\label{I} 
x(t)=
\begin{cases}
 \theta(t), & \text{ if } t\in [\gamma, t_{0}]\\
\theta(t_{0})+\displaystyle\int_{t_{0}}^{t}F(s, x_{s})ds, & \text{ if } t\in [t_{0}, \beta_{1}[. 
\end{cases}
\]

The following lemma will be utilized during our main results, 
see also Walter \cite[Theorem III. (Gronwall’s inequality), page 14]{Wal64}. 

\begin{lemma}\label{L1.1}
 Let $J\subset \mathbb{R}$ be an interval, 
$g, v\colon J\to \mathbb{R}$ be continuous functions and 
$h\colon J\to \mathbb{R}$ be a nonnegative, Lebesgue integrable function such that 
\[
 v(t)\leq g(t)+\int_{t_{0}}^{t}h(s)v(s)ds 
\qquad 
\left(t\in J\right). 
\]
Then 
\[
 v(t)\leq g(t)+\int_{t_{0}}^{t}g(s)h(s)e^{H(t)-H(s)}ds 
\qquad 
\left(t\in J\right). 
\]
Furthermore, if $g$ is absolutely continuous, then 
\[
 v(t)\leq e^{H(t)}\left(g(t_{0})+\int_{t_{0}}^{t}g'(s)e^{H(s)}ds\right)
\]
holds, where 
\[
 H(t)=\int_{t_{0}}^{t}h(s)ds 
\qquad 
\left(t\in J\right). 
\]
\end{lemma}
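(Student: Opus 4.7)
The plan is the standard integrating-factor argument for Gronwall's inequality. First I would introduce the auxiliary function
\[
 w(t)=\int_{t_{0}}^{t}h(s)v(s)\,ds,
\]
so that $w(t_{0})=0$ and, since $h$ is integrable and $v$ is continuous, $w$ is absolutely continuous with $w'(t)=h(t)v(t)$ almost everywhere. The hypothesis $v(t)\leq g(t)+w(t)$ combined with $h\geq 0$ yields the differential inequality
\[
 w'(t)-h(t)w(t)\leq h(t)g(t).
\]

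Next I would multiply this inequality by the integrating factor $e^{-H(t)}$, which rewrites it as
\[
 \bigl(w(t)e^{-H(t)}\bigr)'\leq h(t)g(t)e^{-H(t)}.
\]
Integrating from $t_{0}$ to $t$ and using $w(t_{0})=0$ and $H(t_{0})=0$, I obtain $w(t)\leq \int_{t_{0}}^{t}g(s)h(s)e^{H(t)-H(s)}\,ds$, and plugging this back into $v(t)\leq g(t)+w(t)$ gives the first stated bound.

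For the second claim, I would start from that first bound and, assuming $g$ is absolutely continuous, perform an integration by parts on $\int_{t_{0}}^{t}g(s)h(s)e^{H(t)-H(s)}\,ds$, exploiting the identity $h(s)e^{-H(s)}\,ds=-d\bigl(e^{-H(s)}\bigr)$. The boundary terms at $s=t$ and $s=t_{0}$ produce precisely $-g(t)+g(t_{0})e^{H(t)}$, which after pulling out the factor $e^{H(t)}$ cancels the outer $g(t)$ in $v(t)\leq g(t)+\cdots$ and leaves the asserted formula (with the exponential factor $e^{-H(s)}$ appearing under the remaining integral, up to what appears to be a sign typo in the statement).

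The only slightly delicate step is the move from the hypothesis (a merely integral inequality for a continuous $v$) to a pointwise differential inequality; this is handled cleanly because $w$, being an indefinite Lebesgue integral of the integrable function $hv$, is absolutely continuous and differentiable a.e., and the integrating-factor manipulation only requires the a.e. inequality together with absolute continuity in order to integrate back. The integration by parts in the final step requires $g$ to be absolutely continuous, which is exactly the extra hypothesis supplied for the second conclusion.
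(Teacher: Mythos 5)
The paper gives no proof of this lemma: it is quoted directly from Walter's monograph (Theorem III, p.~14 of \cite{Wal64}), so there is nothing to compare your argument against. Your integrating-factor proof is the standard one and is correct: setting $w(t)=\int_{t_{0}}^{t}h(s)v(s)\,ds$ gives an absolutely continuous function, the hypothesis and $h\geq 0$ yield $\bigl(w(t)e^{-H(t)}\bigr)'\leq h(t)g(t)e^{-H(t)}$ a.e., and integrating back is legitimate precisely because of the absolute continuity; the integration by parts for the second claim is likewise justified since both $g$ and $e^{-H}$ are absolutely continuous. You are also right that the final display of the lemma as printed should read $e^{-H(s)}$ rather than $e^{H(s)}$ under the integral (equivalently, $e^{H(t)-H(s)}$ after distributing the outer factor) --- this is a typo in the statement, and your computation recovers the correct formula.
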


Finally we remark the without the loss of generality it is enough to restrict ourselves to 
first order delay differential equations. 
Indeed, 
let $J=[t_{0}, \beta[$ be an interval, 
$D\subset \mathbb{R}^{n}$ be an open set. 
Let $F\colon J\times D^{m}\to \mathbb{R}$ be a continuous function.  
Let further $g_{1}, \ldots, g_{m}\colon J\to \mathbb{R}$ be continuous functions so that 
\[
 \gamma\leq g_{j}(t)\leq t 
\qquad 
\left(t\in [t_{0}, \beta[\right)
\]
for all $j=1, \ldots, m$ with a certain real number $\gamma$. 
Finally, let $\theta \colon [\gamma, t_{0}]\to \mathbb{R}$ be $(n-1)$ times continuously differentiable initial function.
Assume that the $(n-1)$ times continuously differentiable function $x\colon [\gamma, \beta[\to \mathbb{R}$ is a solution of the problem 
\[
 \begin{array}{rcl}
  x^{(n)}(t)&=&F\left(t, x(g_{1}(t)), \ldots, x^{(n-1)}(g_{m}(t))\right)\quad \left(t\in [t_{0}, \beta[\right)\\
x(t)&=&\theta(t) \qquad \left(t\in [\gamma, t_{0}]\right)
 \end{array}
\]
Define the function $y\colon [\gamma, \beta[\to \mathbb{R}^{n}$ by 
\[
 y(t)=\begin{pmatrix}
       x(t)\\
x'(t)\\
\vdots\\
x^{(n-1)(t)}
      \end{pmatrix}
\qquad 
\left(t\in [\gamma, \beta[\right). 
\]
Then the continuous function $y$ is a solution of the following first order delay system
\[
 \begin{array}{rcl}
  y'_{1}(t)&=&y_{2}(t)\\
y'_{2}(t)&=&y_{3}(t)\\
&\vdots& \\
y'_{n-1}(t)&=&y_{n}(t)\\
y'_{n}(t)&=& F(t, y(g_{1}(t)), \ldots, y(g_{m}(t))
 \end{array}
\qquad 
\left(t\in [t_{0}, \beta[\right), 
\]
furthermore 
\[
 y(t)=\begin{pmatrix}
       \theta(t)\\
\theta'(t)\\
\vdots\\
\theta^{(n-1)}(t)
      \end{pmatrix}
\qquad 
\left(t\in [\gamma, t_{0}]\right), 
\]
where $y_{i}\colon [\gamma, \beta[\to \mathbb{R}$ denotes the $i$th coordinate function of the 
function $y$ for all $i=1, \ldots, n$. 
Furthermore, the converse implication is also valid. Namely, if the function $y\colon [\gamma, \beta[\to \mathbb{R}^{n}$ is a 
solution of the latter system, then the first coordinate function $y_{1}\colon [\gamma, \beta[\to \mathbb{R}$ is a solution of the above 
$n$th order delay equation.

\section{Main results}

In this section, our main results will be divided into four parts. 
First we will prove a basic lemma that will be utilized later. 
After that (with the aid of Lemma \ref{L2.1}) we will present a 
\emph{uniqueness theorem}. Afterwards a \emph{continuous dependence on the initial function} type result will 
follow. 
Finally, we will end our paper with a stability type result.

\subsection{A useful lemma}

\begin{lemma}\label{L2.1}
Let $J=[t_{0}, \beta[\subset \mathbb{R}$ be an interval and 
$D\subset \mathbb{R}^{n}$ be an open set. 
Let further $f, \widetilde{f}\colon J\times D^{m}\to \mathbb{R}^{n}$ be mappings and 
$g_{1}, \ldots, g_{m}\colon J\to \mathbb{R}$ be functions so that 
\[
 \gamma \leq g_{j}(t)\leq t 
\qquad 
\left(t\in [t_{0}, \beta[\right)
\]
for all $j=1, \ldots, m$ with a certain real number $\gamma$. Finally, let 
$\theta, \widetilde{\theta}\colon [\gamma, t_{0}]\to D$ be given initial functions. 
Assume that the functions $x, \widetilde{x}\colon [\gamma, \beta[\to D$ are solutions to problems 
\[
\begin{array}{rcl}
 x'(t)&=&F(t, x_{t})\qquad t\geq t_{0}\\
x(t)&=&\theta(t) \qquad t\in [\gamma, t_{0}]. 
\end{array} 
\]
and 
\[
 \begin{array}{rcl}
 \widetilde{x}'(t)&=&\widetilde{F}(t, \widetilde{x_{t}})\qquad t\geq t_{0}\\
\widetilde{x}(t)&=&\widetilde{\theta}(t) \qquad t\in [\gamma, t_{0}]. 
\end{array}
\]
respectively. 
Suppose further that we also have the following. 
\begin{enumerate}[(i)]
 \item there exists a continuous nonnegative function 
$h\colon J\to \mathbb{R}$ such that 
for all $t\in [\gamma, \beta[$ and for any $z\in D^{m}$
\[
 \left\|F(t, z)-\widetilde{F}(t, z) \right\| \leq h(t)\left\| z\right\|. 
\]
\item there exists a continuous nonnegative function 
$k\colon J\to \mathbb{R}$ such that 
\[
\left\|F(t, z)-F(t, z') \right\|\leq k(t)\left\| z-z'\right\|
\]
for all $t\in [\gamma, \beta[$ and for any $z, z'\in D^{m}$. 
\end{enumerate}
Then 
\[
 \left\|x(t)-\widetilde{x}(t) \right\|
\leq 
\left\|\theta-\widetilde{\theta} \right\|
\exp\left(\int_{t_{0}}^{t}h(s)+k(s)ds\right)
\qquad 
\left(t\in J\right). 
\]
\end{lemma}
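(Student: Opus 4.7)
The plan is to use the integral reformulation $(\mathscr{I})$ together with Gronwall's inequality (Lemma \ref{L1.1}) applied to the difference $x(t) - \widetilde{x}(t)$. Since $f$ and all the $g_j$ are continuous, both $x$ and $\widetilde{x}$ satisfy the integral form $(\mathscr{I})$ on $[t_0,\beta[$. Subtracting the two integral equations yields, for $t\in[t_0,\beta[$,
\[
 x(t) - \widetilde{x}(t) = \bigl(\theta(t_{0}) - \widetilde{\theta}(t_{0})\bigr) + \int_{t_{0}}^{t}\bigl[F(s, x_{s}) - \widetilde{F}(s, \widetilde{x}_{s})\bigr]\,ds,
\]
while on $[\gamma,t_0]$ the difference is simply $\theta(t)-\widetilde{\theta}(t)$. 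I would then insert the intermediate term $F(s, \widetilde{x}_{s})$ so as to split the integrand into $F(s,x_s)-F(s,\widetilde x_s)$ and $F(s,\widetilde x_s)-\widetilde F(s,\widetilde x_s)$. Applying hypothesis (ii) to the first piece gives $k(s)\|x_s-\widetilde x_s\|$; applying hypothesis (i) to the second produces a term proportional to $h(s)$.

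To convert this into a usable Gronwall-type inequality I would introduce the sup-envelope
\[
 v(t) := \sup_{\gamma\le \tau\le t}\|x(\tau)-\widetilde{x}(\tau)\|,
\]
and use the natural segment norm $\|x_s-\widetilde x_s\|\le v(s)$. Combined with the trivial bound $v(t_0)\le \|\theta-\widetilde\theta\|$, the estimate above leads to an inequality of the shape
\[
 v(t) \le \|\theta-\widetilde\theta\| + \int_{t_{0}}^{t}\bigl(h(s)+k(s)\bigr)v(s)\,ds \qquad (t\in J).
\]

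Finally, I would invoke Lemma \ref{L1.1} in its absolutely continuous form, taking $g\equiv \|\theta-\widetilde\theta\|$ (constant, hence $g'\equiv 0$) and Gronwall kernel $h(s)+k(s)$; this immediately yields
\[
 v(t) \le \|\theta-\widetilde\theta\|\exp\!\left(\int_{t_{0}}^{t}\bigl(h(s)+k(s)\bigr)\,ds\right),
\]
from which the claimed bound on $\|x(t)-\widetilde{x}(t)\|\le v(t)$ follows.

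The routine steps are the integral subtraction and the appeal to Gronwall; the main technical point I expect to have to be careful about is the bookkeeping needed to pass from the pointwise deviation $\|x(t)-\widetilde{x}(t)\|$ to the segment deviation $\|x_t-\widetilde x_t\|$ that appears under the integral, which is exactly what forces the introduction of the monotone envelope $v$. A secondary point to verify is the precise reading of hypothesis (i): for the conclusion to come out in the stated clean form one effectively needs the right-hand side of (i), when evaluated along the solutions, to contribute an $h(s)\|x_s-\widetilde x_s\|$ term to the integrand after the triangle split, so I would confirm this interpretation before finalizing the estimate.
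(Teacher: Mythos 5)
Your proposal matches the paper's proof step for step: the same passage to the integral form $(\mathscr{I})$, the same insertion of the intermediate term $F(s,\widetilde{x}_s)$, the same sup-envelope $v(t)=\sup_{\gamma\le\xi\le t}\|x(\xi)-\widetilde{x}(\xi)\|$, and the same appeal to Lemma \ref{L1.1} with the constant function $g\equiv\|\theta-\widetilde{\theta}\|$. The caveat you raise about hypothesis (i) is well placed and is in fact a gap in the paper's own argument: as literally stated, (i) applied with $z=\widetilde{x}_s$ yields the bound $h(s)\|\widetilde{x}_s\|$, not $h(s)\sup_{\gamma\le\xi\le s}\|x(\xi)-\widetilde{x}(\xi)\|$, yet the paper writes the latter without comment, so to make the argument close one must either reinterpret (i) with $\|z\|$ replaced by the relevant difference or accept an additional additive term involving $\|\widetilde{x}_s\|$ in the Gronwall inequality.
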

\begin{proof}
 Applying integral inequality \ref{I}, we immediately get that 
\[
x(t)=\theta(t_{0})+\displaystyle\int_{t_{0}}^{t}F(s, x_{s})ds
\]
and 
\[
\widetilde{x}(t)= \widetilde{\theta}(t_{0})+\displaystyle\int_{t_{0}}^{t}F(s, \widetilde{x}_{s})ds
\]
holds for all $t\in J$. 
Therefore 
\begin{multline*}
 \left\| x(t)-\widetilde{x}(t)\right\|
\\
=
\left\| \theta(t_{0})+\displaystyle\int_{t_{0}}^{t}F(s, x_{s})ds
- \left(\widetilde{\theta}(t_{0})+\displaystyle\int_{t_{0}}^{t}F(s, \widetilde{x}_{s})ds\right)
\right\| 
\\
\leq 
\left\| \theta(t_{0})-\widetilde{\theta}(t_{0})\right\| 
+
\left\|  \int_{t_{0}}^{t} F(s, x_{s})-\widetilde{F}(s, \widetilde{s}_{s})ds \right\|
\\
\leq 
\left\| \theta(t_{0})-\widetilde{\theta}(t_{0})\right\|
+
\left\|  \int_{t_{0}}^{t} F(s, x_{s})-F(s, \widetilde{x}_{s})ds \right\|
+
\left\|  \int_{t_{0}}^{t} F(s, \widetilde{x}_{s})-\widetilde{F}(s, \widetilde{x}_{s})ds \right\|
\\
\leq 
\left\| \theta(t_{0})-\widetilde{\theta}(t_{0})\right\|
+\int_{t_{0}}^{t}k(s)\sup_{\gamma\leq \xi\leq s}\left\|x(\xi)-\widetilde{x}(\xi) \right\|ds
\\+
\int_{t_{0}}^{t}h(s)\sup_{\gamma\leq \xi\leq s}\left\|x(\xi)-\widetilde{x}(\xi) \right\|ds
\\
\leq 
\left\| \theta(t_{0})-\widetilde{\theta}(t_{0})\right\|
+\int_{t_{0}}^{t}(k(s)+h(s))\sup_{\gamma\leq \xi\leq s}\left\|x(\xi)-\widetilde{x}(\xi) \right\|ds
\end{multline*}
Define the function $v\colon [\gamma, \beta[\to \mathbb{R}$ by 
\[
 v(t)=\sup_{\gamma\leq \xi\leq t}\left\|x(\xi)-\widetilde{x}(\xi) \right\| 
\qquad 
\left(t\in [\gamma, \beta[\right). 
\]
Then 
\[
 \left\|x(t)-\widetilde{x}(t) \right\|
\leq 
v(t_{0})+\int_{t_{0}}^{t}(k(s)+h(s))v(s)ds
\qquad 
\left(t\in J\right), 
\]
or, since 
\[
 \left\|\theta(t)-\widetilde{\theta}(t) \right\|\leq v(t_{0})
\qquad 
\left(t\in [\gamma, t_{0}]\right)
\]
holds, we have 
\[
 v(t)\leq v(t_{0})+\int_{t_{0}}^{t}(k(s)+h(s))v(s)ds
\]
for all $t\in J$. Now, Lemma \ref{L1.1} implies that 
\[
 \left\|x(t) -\widetilde{x}(t)\right\|
\leq \left\|\theta -\widetilde{\theta} \right\| 
\exp \left(\int_{t_{0}}^{t}k(s)+h(s)ds\right)
\]
for all $t\in J$, which ends the proof. 
\end{proof}

\subsection{Corollaries}

\subsubsection*{A uniqueness theorem}

Based on the previous results, now we are able to prove the following uniqueness theorem. 

\begin{theorem}
 Let $J=[t_{0}, \beta[$ be an interval, 
$D\subset \mathbb{R}^{n}$ be an open set. 
Let $F\colon J\times D^{m}\to \mathbb{R}^{n}$ be a function so that 
\[
 \left\|F(t, z) -F(t, z')\right\|\leq k(t)\left\|z-z' \right\| 
\qquad 
\left(t\in J, z\in D^{m}\right)
\]
is fulfilled with a certain continuous function $k\colon J\to \mathbb{R}$. 
Let further $g_{1}, \ldots, g_{m}\allowbreak\colon  J\to \mathbb{R}$ be continuous functions so that 
\[
 \gamma\leq g_{j}(t)\leq t 
\qquad 
\left(t\in [t_{0}, \beta[\right)
\]
for all $j=1, \ldots, m$ with a certain real number $\gamma$. 
Finally, let $\theta\colon [\gamma, t_{0}]\to \mathbb{R}$ be a given continuous initial function. 
Then problem \eqref{prob} has at most one solution on any interval $[\gamma, \beta_{1}[$, where 
$t_{0}< \beta_{1}\leq \beta$. 
\end{theorem}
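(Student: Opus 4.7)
The plan is to deduce this uniqueness theorem as a direct corollary of Lemma \ref{L2.1}, taking advantage of the fact that the lemma is stated for two potentially different right-hand sides and two potentially different initial data. Specializing the lemma to the case where both pairs coincide will immediately collapse the estimate to zero.

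More concretely, suppose $x, \widetilde{x}\colon [\gamma, \beta_{1}[\to D$ are both solutions of problem \eqref{prob} with the same right-hand side $F$ and the same initial function $\theta$. I would apply Lemma \ref{L2.1} with the choices $\widetilde{F}=F$ and $\widetilde{\theta}=\theta$. Hypothesis (ii) of Lemma \ref{L2.1} is exactly the Lipschitz assumption on $F$ that is part of the statement of the theorem. For hypothesis (i), since $F-\widetilde{F}\equiv 0$, one may take $h\equiv 0$, which is trivially continuous and nonnegative, and for which the bound
\[
\bigl\|F(t,z)-\widetilde{F}(t,z)\bigr\|=0\leq h(t)\,\|z\|
\]
holds for every $t\in J$ and $z\in D^{m}$.

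With these choices the conclusion of Lemma \ref{L2.1} becomes
\[
\bigl\|x(t)-\widetilde{x}(t)\bigr\|\leq \bigl\|\theta-\widetilde{\theta}\bigr\|\exp\!\left(\int_{t_{0}}^{t}\bigl(h(s)+k(s)\bigr)\,ds\right)=0\cdot \exp\!\left(\int_{t_{0}}^{t}k(s)\,ds\right)=0
\]
for every $t\in [\gamma,\beta_{1}[$, which yields $x\equiv \widetilde{x}$ on the entire interval and proves uniqueness.

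There is essentially no obstacle here beyond bookkeeping: one only needs to observe that the theorem's assumptions are a strict specialization of those of Lemma \ref{L2.1}, and that the degenerate choice $h\equiv 0$ is admissible. The only minor subtlety is verifying that the lemma's conclusion, proved on $J=[t_{0},\beta[$, transfers to the (possibly shorter) interval $[\gamma,\beta_{1}[$ on which the two solutions are defined; this is immediate because the integral estimate used in the proof of Lemma \ref{L2.1} is local in the upper limit, so restricting $\beta$ to $\beta_{1}$ is harmless.
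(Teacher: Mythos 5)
Your proof is correct, but it is a genuinely different argument from the one the paper writes down. You obtain uniqueness as the degenerate case $\widetilde{F}=F$, $\widetilde{\theta}=\theta$, $h\equiv 0$ of Lemma \ref{L2.1}, so that the bound $\left\|x(t)-\widetilde{x}(t)\right\|\leq \left\|\theta-\widetilde{\theta}\right\|\exp\left(\int_{t_{0}}^{t}(h(s)+k(s))\,ds\right)$ collapses to zero; this is legitimate under the theorem's hypotheses (the Lipschitz condition is assumed globally on $D^{m}$, so hypothesis (ii) of the lemma holds verbatim, and your remark that the Gronwall estimate is local in the upper limit correctly disposes of the passage from $\beta$ to $\beta_{1}$). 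The paper, by contrast, does not invoke Lemma \ref{L2.1} at all (despite announcing that it would): it argues by contradiction, setting $t_{1}=\inf\{t\,\vert\,x(t)\neq\widetilde{x}(t)\}$, localizing to a small interval $[t_{1},\beta_{2}[$ on which $k$ is bounded by some constant $K$ and the values $x(g_{j}(t)),\widetilde{x}(g_{j}(t))$ stay in prescribed neighbourhoods inside $D$, and then applying Gronwall's inequality there to force $x\equiv\widetilde{x}$ past $t_{1}$. Your route is shorter and cleaner given the stated global Lipschitz hypothesis; the paper's localization argument is the more robust template, since it would survive weakening the hypothesis to a Lipschitz condition that holds only locally (on compact neighbourhoods), which is the setting of Driver's Theorem A that the authors cite. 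Under the hypotheses as actually stated, your specialization of Lemma \ref{L2.1} is a complete proof.
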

\begin{proof}
 Suppose on the contrary that there are two different 
solutions, say $x$ and $\widetilde{x}$. 
Since $x(t)=\widetilde{x}(t)$ on $[\gamma, t_{0}]$, there exists $t\in ]t_{0}, \beta_{1}[$ such that 
$x(t)\neq \widetilde{x}(t)$. 
Let 
\[
 t_{1}=\inf\left\{t\in ]t_{0}, \beta_{1}[\, \vert \, x(t)\neq \widetilde{x}(t)\right\}. 
\]
Then $t_{1}\in ]t_{0}, \beta_{1}[$ and due to the continuity of $x$ and $\widetilde{x}$ we have 
\[
 x(t)=\widetilde{x}(t)
\]
for all $t\in [\gamma, t_{1}[$. 
Since $t_{1}\in ]t_{0}, \beta_{1}[$ and for all $j=1, \ldots, m$ 
$x(g_{j}(t_{1}))$ belongs to the open set $D$, there are positive real numbers 
$\rho_{1}, \rho_{2}$ such that 
\[
 [t_{1}-\rho, t_{1}+\rho]\subset ]t_{0}, \beta_{1}[ 
\quad 
\text{and}
\quad 
A_{j}=
\left\{\xi\in \mathbb{R}^{n}\, \vert \, \left\|x(g_{j}(t))-\xi\right\|\leq \rho_{2} \right\}\subset D. 
\]
Let 
\[
 K=\sup_{t\in [t_{1}-\rho_{1}, t_{1}+\rho_{1}]}k(t). 
\]
Since $k$ is continuous on the compact interval $[t_{1}-\rho_{1}, t_{1}+\rho_{1}]$ we have $K\in \mathbb{R}$. 

Again, due to the continuity of $x$ and $g_{1}, \ldots, g_{m}$, there exists $\beta_{2}\in ]t_{1}, t_{1}+\rho_{1}[$ 
such that 
$x(g_{j}(t)), \widetilde{x}(g_{j}(t))\in A_{j}$ for all $j=1, \ldots, m$ and 
$t\in ]t_{1}, \beta_{2}[$. 
Let now $t\in [t_{1}, \beta_{2}[$, then 
\[
 \left\|x(t) -\widetilde{x}(t)\right\|
=
\left\|\int_{t_{1}}^{t}F(s, x_{s})-F(s, \widetilde{x}_{s})ds \right\|
\leq K\int_{t_{1}}^{t}\sup_{\gamma\leq \xi\leq s}\left\| x(\xi)-\widetilde{x}(\xi)\right\| ds. 
\]
Define 
\[
 v(t)=\sup_{\gamma\leq \xi\leq t} \left\| x(\xi)-\widetilde{x}(\xi)\right\| 
\qquad 
\left(t\in [\gamma, \beta_{1}[\right). 
\]
Then 
\[
 \left\| x(t)-\widetilde{x}(t)\right\| \leq K\int_{t_{1}}^{t}v(s)ds
\qquad 
\left(t\in [t_{1}, \beta_{2}[\right), 
\]
or, since $x\equiv \widetilde{x}$ on $[\gamma, t_{1}]$ we have 
\[
 v(t)\leq K\int_{t_{1}}^{t}v(s)ds 
\qquad 
\left(t\in [t_{1}, \beta_{2}[\right). 
\]
In view of Lemma \ref{L1.1}., this implies that $v(t)=0$ for all $t\in [t_{1}, \beta_{2}[$, that is, 
\[
 x(t)=\widetilde{x}(t) 
\qquad 
\left(t\in [t_{1}, \beta_{1}[\right), 
\]
which contradicts to the definition of $t_{1}$. 
\end{proof}

We remark that the case when 
\[
 k(t)=k 
\qquad 
\left(t\in J\right)
\]
can be found in the monograph of Driver, see \cite[Theorem A., p. 259]{Dri77}. 

\subsubsection*{Continuous dependence on the initial function}

Using Lemma \ref{L2.1}. with the choice 
$h \equiv 0$, the following result can be derived immediately.

\begin{theorem}
 Let $J=[t_{0}, \beta[$ be an interval, 
$D\subset \mathbb{R}^{n}$ be an open set. 
Let $F\colon J\times D^{m}\to \mathbb{R}^{n}$ be a function so that 
\[
 \left\|F(t, z) -F(t, z')\right\|\leq k(t)\left\|z-z' \right\| 
\qquad 
\left(t\in J, z\in D^{m}\right)
\]
is fulfilled with a certain continuous function $k\colon J\to \mathbb{R}$. 
Let further $g_{1}, \ldots, g_{m}\allowbreak\colon J\to \mathbb{R}$ be continuous functions so that 
\[
 \gamma\leq g_{j}(t)\leq t 
\qquad 
\left(t\in [t_{0}, \beta[\right)
\]
for all $j=1, \ldots, m$ with a certain real number $\gamma$. 
Finally, let $\theta, \widetilde{\theta}\colon [\gamma, t_{0}]\to \mathbb{R}$ be a given continuous initial functions. 
If the functions $x, \widetilde{x}\colon [\gamma, \beta[\to \mathbb{R}$ are solutions to the problems 
\[
 \begin{cases}
  x'(t)=F(t, x_{t})& \text{ for $t\geq t_{0}$}\\
x(t)=\theta(t)& t\in [\gamma, t_{0}]
 \end{cases}
\quad 
\text{and}
\quad 
\begin{cases}
  \widetilde{x}'(t)=F(t, \widetilde{x}_{t})& \text{ for $t\geq t_{0}$}\\
\widetilde{x}(t)=\widetilde{\theta}(t)& t\in [\gamma, t_{0}]
 \end{cases}. 
\]
Then for all $t\in [t_{0}, \beta[$
\[
 \left\| x(t)-\widetilde{x}(t)\right\| 
\leq \left\| \theta-\widetilde{\theta}\right\| \exp\left(\int_{t_{0}}^{t}k(s)ds\right). 
\]
\end{theorem}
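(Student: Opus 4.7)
The plan is to deduce this theorem directly from Lemma \ref{L2.1}, exactly as the remark preceding the statement suggests. I would take $\widetilde{F}=F$ in the setup of Lemma \ref{L2.1}, so that the two problems coincide in their right-hand sides and differ only in the initial data $\theta,\widetilde{\theta}$.

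First I would verify that the hypotheses of Lemma \ref{L2.1} are met with this choice. Condition (i) asks for a continuous nonnegative $h\colon J\to\mathbb{R}$ with $\|F(t,z)-\widetilde{F}(t,z)\|\leq h(t)\|z\|$ for all $t\in[\gamma,\beta[$ and $z\in D^{m}$; since $F=\widetilde{F}$ here, the left-hand side is identically zero, so we may take $h\equiv 0$. Condition (ii) is the Lipschitz-in-the-space-variable assumption $\|F(t,z)-F(t,z')\|\leq k(t)\|z-z'\|$, which is precisely the hypothesis imposed in the theorem on $F$ with the given continuous $k$. The remaining structural assumptions on $J$, $D$, the delay functions $g_{j}$, and the initial functions $\theta,\widetilde{\theta}$ match verbatim.

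Then I would simply invoke the conclusion of Lemma \ref{L2.1}, which gives
\[
\left\|x(t)-\widetilde{x}(t)\right\|
\leq
\left\|\theta-\widetilde{\theta}\right\|
\exp\!\left(\int_{t_{0}}^{t}h(s)+k(s)\,ds\right)
\qquad (t\in J).
\]
Substituting $h\equiv 0$ collapses the integrand to $k(s)$ and yields the asserted inequality on $[t_{0},\beta[$.

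There is essentially no obstacle here: the result is a direct specialization, and the only thing to be careful about is bookkeeping, namely matching the notation and the domains of the hypotheses of Lemma \ref{L2.1} to those of the theorem. In particular one should note that the theorem's conclusion is stated only for $t\in[t_{0},\beta[$, which is a sub-interval of the range $[\gamma,\beta[$ in Lemma \ref{L2.1}, so no extra work is required on $[\gamma,t_{0}]$ (there the estimate is trivially $\|\theta(t)-\widetilde{\theta}(t)\|\leq\|\theta-\widetilde{\theta}\|$).
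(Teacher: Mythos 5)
Your proposal is correct and matches the paper's own argument exactly: the paper likewise derives this theorem as an immediate consequence of Lemma \ref{L2.1} with the choice $\widetilde{F}=F$ and $h\equiv 0$, so the exponent collapses to $\int_{t_{0}}^{t}k(s)\,ds$. Nothing further is needed.
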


\subsubsection*{A stability result}

Finally, we end this paper with the following stability type result. 

\begin{theorem}
 Let $\varepsilon>0$ be arbitrarily fixed, $J=[t_{0}, \beta[$ be an interval and 
$D\subset \mathbb{R}^{n}$ be an open set. 
Let $F\colon J\times D^{m}\to \mathbb{R}^{n}$ be a function so that 
\[
 \left\|F(t, z) -F(t, z')\right\|\leq k(t)\left\|z-z' \right\| 
\qquad 
\left(t\in J, z\in D^{m}\right)
\]
is fulfilled with a certain continuous function $k\colon J\to \mathbb{R}$. 
Let further $g_{1}, \ldots, g_{m}\allowbreak\colon J\to \mathbb{R}$ be continuous functions so that 
\[
 \gamma\leq g_{j}(t)\leq t 
\qquad 
\left(t\in [t_{0}, \beta[\right)
\]
for all $j=1, \ldots, m$ with a certain real number $\gamma$. 
Finally, let $\theta\colon [\gamma, t_{0}]\to \mathbb{R}$ be a given continuous initial function. 
Assume that the continuous initial function 
$x\colon [t_{0}\beta[\to \mathbb{R}$ fulfills 
\[
 \begin{array}{rcl}
  \left\|x'(t)-F(t, x_{t}) \right\|&\leq &\varepsilon \qquad \left(t\in [t_{0}, \beta[\right)\\
x(t)&=&\theta(t) \qquad \left(t\in [\gamma, t_{0}]\right). 
 \end{array}
\]
Then there exists a uniquely determined solution $\widetilde{x}\colon [t_{0}, \beta[\to \mathbb{R}$ of the problem 
\[
 \begin{array}{rcl}
  x'(t)&=&F(t, x_{t}) \qquad \left(t\in [t_{0}, \beta[\right)\\
x(t)&=&\theta(t) \qquad \left(t\in [\gamma, t_{0}]\right). 
 \end{array}
\]
such that 
\[
 \left\|x(t)-\widetilde{x}(t) \right\|\leq \varepsilon(t-t_{0})\exp\left(\int_{t_{0}}^{t}k(s)ds\right)
\]
for all $t\in [t_{0}, \beta[$. 
\end{theorem}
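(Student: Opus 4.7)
The plan is to reduce the claim to Gronwall's lemma (Lemma \ref{L1.1}), in essentially the same spirit as Lemma \ref{L2.1}. First I would separate the two assertions: uniqueness of $\widetilde{x}$ is immediate from the preceding uniqueness theorem (the Lipschitz coefficient is the same $k$), so the real work is to exhibit $\widetilde{x}$ and to estimate $\|x-\widetilde{x}\|$. For the existence of $\widetilde{x}$ on $[\gamma,\beta[$, I would invoke the standard Picard-iteration argument on the integral equation \eqref{I} together with a continuation argument, which is routine under the continuous Lipschitz hypothesis on $F$.

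Second, I would turn the approximate differential inequality into an integral one. Writing $x'(s)=F(s,x_s)+r(s)$ with $\|r(s)\|\le\varepsilon$ and integrating from $t_0$ to $t$ yields
\[
x(t)=\theta(t_0)+\int_{t_0}^{t}F(s,x_s)\,ds+\int_{t_0}^{t}r(s)\,ds,
\]
while $\widetilde{x}$ obeys \eqref{I}. Subtracting, taking norms, and invoking the Lipschitz bound on $F$ gives
\[
\|x(t)-\widetilde{x}(t)\|\le\varepsilon(t-t_0)+\int_{t_0}^{t}k(s)\sup_{\gamma\le\xi\le s}\|x(\xi)-\widetilde{x}(\xi)\|\,ds.
\]

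Third, I would set $v(t)=\sup_{\gamma\le\xi\le t}\|x(\xi)-\widetilde{x}(\xi)\|$. Since $x\equiv\widetilde{x}\equiv\theta$ on $[\gamma,t_0]$ one has $v(t_0)=0$, and because the right-hand side of the previous display is non-decreasing in $t$ (and vanishes on $[\gamma,t_0]$) the same bound holds for $v(t)$, giving the scalar Gronwall-type inequality
\[
v(t)\le\varepsilon(t-t_0)+\int_{t_0}^{t}k(s)\,v(s)\,ds\qquad(t\in[t_0,\beta[).
\]
Applying Lemma \ref{L1.1} with the non-decreasing absolutely continuous function $g(t)=\varepsilon(t-t_0)$ and $h=k$ yields
\[
v(t)\le\varepsilon(t-t_0)\exp\!\left(\int_{t_0}^{t}k(s)\,ds\right),
\]
which dominates $\|x(t)-\widetilde{x}(t)\|$, as required.

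The estimation itself is routine Gronwall bookkeeping, so I anticipate no genuine obstacle there. The only delicate point I foresee is guaranteeing that the exact solution $\widetilde{x}$ is defined on the full forward interval $[t_0,\beta[$ rather than merely locally, since the Lipschitz hypothesis on $F$ does not by itself preclude $\widetilde{x}$ from escaping $D$. In practice one handles this either by taking $D=\mathbb{R}^{n}$, by imposing an a priori growth bound on $F$, or by bootstrapping the closeness estimate above to trap $\widetilde{x}$ in a neighbourhood of $x$ inside $D$ throughout $[t_0,\beta[$.
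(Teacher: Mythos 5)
Your proposal is correct and follows essentially the same route as the paper: rewrite the approximate equation as an exact integral identity with a bounded perturbation, subtract the integral equation for $\widetilde{x}$, pass to $v(t)=\sup_{\gamma\le\xi\le t}\|x(\xi)-\widetilde{x}(\xi)\|$, and apply Lemma \ref{L1.1} with $g(t)=\varepsilon(t-t_{0})$ and $h=k$. You are in fact somewhat more careful than the paper, which simply asserts the existence of $\widetilde{x}$ on all of $[t_{0},\beta[$ without addressing the global-existence issue you rightly flag.
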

\begin{proof}
 Assume that for all $t\in [t_{0}, \beta[$ we have 
\[
 \left\| x'(t)-F(t, x_{t})\right\| \leq \varepsilon. 
\]
Equivalently, this means that $x$ is a solution to the problem 
\[
 \begin{array}{rcl}
  x'(t)&=&\widetilde{F}(t, x_{t}) \qquad \left(t\in [t_{0}, \beta[\right)\\
x(t)&=&\theta(t) \qquad \left(t\in [\gamma, t_{0}]\right), 
 \end{array}
\]
where 
\[
 \widetilde{F}(t, z)=F(t, z)+b(t) 
\qquad 
\left(t\in [t_{0}, \beta[, z\in D^{m}\right), 
\]
with a certain bounded function $b\colon J\to \mathbb{R}$. 

Let $\widetilde{x}\colon J\to \mathbb{R}$ be the uniquely determined solution of the problem 
\[
 \begin{array}{rcl}
  \widetilde{x}'(t)&=&F(t, \widetilde{x}_{t}) \qquad \left(t\in [t_{0}, \beta[\right)\\
\widetilde{x}(t)&=&\theta(t) \qquad \left(t\in [\gamma, t_{0}]\right). 
 \end{array}
\]
In this case however 
\begin{multline*}
 \left\| x(t)-\widetilde{x}(t)\right\|
=
\left\|\int_{t_{0}}^{t}\widetilde{F}(s, x_{s})-F(s, \widetilde{x}_{s})ds \right\|
\\
\leq 
\int_{t_{0}}^{t}k(s)\sup_{\gamma\leq \xi\leq s}\left\|x(s)-\widetilde{x}(s) \right\|ds +\varepsilon (t-t_{0})
\end{multline*}
for any $t\in [t_{0}, \beta[$. 
Similarly as above, define the function 
$v\colon [\gamma, \beta[\to \mathbb{R}$ through 
\[
 v(t)=\sup_{\gamma\leq \xi\leq t}\left\|x(t)-\widetilde{x}(t) \right\| 
\qquad 
\left(t\in [\gamma, \beta[\right). 
\]
Then the previous inequality simply yield that 
\[
 v(t)\leq \varepsilon(t-t_{0})+\int_{t_{0}}^{t}k(s)v(s)ds 
\qquad 
\left(t\in [\gamma, \beta[\right).
\]
Thus, in virtue of Lemma \ref{L1.1}, for all $t\in [t_{0}, \beta[$
\[
 \left\|x(t)-\widetilde{x}(t) \right\|\leq \varepsilon(t-t_{0})\exp\left(\int_{t_{0}}^{t}k(s)ds\right). 
\]
\end{proof}

\subsubsection*{Acknowledgement}

This paper is dedicated to Professor \'{A}rp\'{a}d Sz\'{a}z (University of Debrecen) on the occasion of this $70$\textsuperscript{th} birthday.

%\bibliographystyle{plain}
%\bibliography{stability_dde}

\end{document}